\numberwithin{equation}{section}
\theoremstyle{plain}
\newtheorem{theorem}{Theorem}[subsection] 
\newtheorem{lemma}[theorem]{Lemma}
\newtheorem{corollary}[theorem]{Corollary}
\theoremstyle{definition}
\theoremstyle{remark}
\newtheorem{remark}[theorem]{Remark}
\newtheorem{case[theorem]}{Case}
\title{Multi-parameter projection theorems with applications to sums-products and finite point configurations in the Euclidean setting}
\author{B. Erdo\u{g}an, D. Hart and A. Iosevich}
\date{\today}          
\begin{document}
\maketitle

\begin{abstract} In this paper we study multi-parameter projection theorems for fractal sets. With the help of these estimates, we recover results about the size of $A \cdot A+\dots+A \cdot A$, where $A$ is a subset of the real line of a given Hausdorff dimension, $A+A=\{a+a': a,a' \in A \}$ and $A \cdot A=\{a \cdot a': a,a' \in A\}$. We also use projection results and inductive arguments to show that if a Hausdorff dimension of a subset of ${\Bbb R}^d$ is sufficiently large, then the ${k+1 \choose 2}$-dimensional Lebesgue measure of the set of $k$-simplexes determined by this set is positive. The sharpness of these results and connection with number theoretic estimates is also discussed. \end{abstract}    


\section{Introduction} 

\vskip.125in

We start out by briefly reviewing the underpinnings of the sum-product results in the discrete setting. A classical conjecture in geometric combinatorics is that either the sum-set or the product-set of a finite subset of the integers is maximally large. More precisely, let $A \subset {\Bbb Z}$ of size $N$ and define 
$$ A+A=\{a+a': a,a' \in A \}; \ A \cdot A=\{a \cdot a': a,a' \in A \}.$$ 

The Erdos-Szemeredi conjecture says that 
$$ \max \{\#(A+A), \# (A \cdot A) \} \gtrapprox N^2,$$ where here, and throughout, $X \lesssim Y$ means that there exists $C>0$ such that $X \leq CY$ and $X \lessapprox Y$, with the controlling parameter $N$ if for every $\epsilon>0$ there exists $C_{\epsilon}>0$ such that $X \leq C_{\epsilon} N^{\epsilon}Y$. The best currently known result is due to Jozsef Solymosi (\cite{Sol09}) who proved that 
\begin{equation} \label{discreteconjecture}  \max \{\#(A+A), \# (A \cdot A) \} \gtrapprox N^{\frac{4}{3}}. \end{equation} 

For the finite field analogs of these problems, see, for example, \cite{BKT}, \cite{HI08}, \cite{HI08II}, \cite{IR07}, \cite{HIKR10}, \cite{HIS07} and \cite{TV06}. 

The sum product problem has also received considerable amount of attention in the Euclidean setting. The following result was proved by Edgar and Miller (\cite{EM02}) and, independently, by Bourgain (\cite{B03}). 
\begin{theorem} A Borel sub-ring of the reals either has Hausdorff dimension $0$ or is all of the real line. 
\end{theorem}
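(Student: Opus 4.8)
The plan is to reduce the statement to a positive-measure assertion and then extract that measure from the ring structure via a sum-product (projection) mechanism. Throughout write $R$ for the Borel sub-ring and $s=\dim_H R$, and assume $s>0$; the goal is $R=\mathbb{R}$.

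\textbf{Reduction to positive measure.} A sub-ring is in particular an additive subgroup of $\mathbb{R}$, so $R-R\subseteq R$. If I can show that $\mathcal{L}^1(R)>0$, then by the Steinhaus theorem $R-R$ contains an interval $(-\varepsilon,\varepsilon)$, whence $(-\varepsilon,\varepsilon)\subseteq R$; since $R$ is closed under addition and $x=n\cdot(x/n)$ with $x/n\in(-\varepsilon,\varepsilon)$ for large $n$, this forces $R=\mathbb{R}$. (Borelness is used only to make $R$ Lebesgue measurable so that Steinhaus applies.) Thus it suffices to prove the implication $\dim_H R>0\Longrightarrow \mathcal{L}^1(R)>0$.

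\textbf{The sum-product/projection mechanism.} The ring axioms give, for every $k\geq 1$,
\begin{equation*}
\underbrace{R\cdot R+\cdots+R\cdot R}_{k}\subseteq R .
\end{equation*}
I would read the left-hand side as a multi-parameter linear projection of the product set $R^{2k}\subseteq\mathbb{R}^{2k}$: the map $(a_1,b_1,\dots,a_k,b_k)\mapsto\sum_{i=1}^k a_i b_i$ is, for each fixed choice of the $a_i$, the orthogonal projection of $R^k$ (in the $b$ variables) onto the line spanned by $(a_1,\dots,a_k)$, and these are exactly the projections in directions coming from $R$. Since $\dim_H R^{2k}\geq 2ks$, once $2ks>1$ a Marstrand-type theorem predicts that \emph{generic} such projections have positive Lebesgue measure. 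Because every projection in a direction determined by $R$ lands inside $R$, producing even one good direction inside $R$ would give $\mathcal{L}^1(R)>0$ and finish the proof by the first step.

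\textbf{The main obstacle.} The difficulty — and the whole content of the theorem — is that the admissible directions $(a_1,\dots,a_k)\in R^k$ form a very thin set, a priori of measure zero and possibly of low dimension, so the ``almost every direction'' conclusion of Marstrand's theorem is useless and the borderline Kaufman exceptional-set bound $\dim_H\{\theta:\dim_H\pi_\theta E<u\}\leq u$ is exactly too weak to yield a contradiction at $u\approx s$. This is precisely the point at which the sum-product phenomenon must enter: one needs the discretized sum-product theorem of Bourgain–Katz–Tao, which says that a $\delta$-discretized set of dimension $s\in(0,1)$ obeying a non-concentration condition cannot have both its sum-set and its product-set of comparable size, i.e. $\max\{|A+A|_\delta,\,|A\cdot A|_\delta\}\geq \delta^{-\varepsilon}|A|_\delta$ for some $\varepsilon=\varepsilon(s)>0$. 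Applied to $A=R_\delta$, and using $R+R\subseteq R$, $R\cdot R\subseteq R$, this manufactures genuine expansion where the ring structure forbids any, ruling out every dimension strictly between $0$ and $1$ and, after iterating the expansion through the $k$-fold sums of products above, driving the measure of $R$ off zero. I expect the delicate part to be establishing the non-concentration hypothesis at a good sequence of scales (after passing to a suitable subset of $R$ of full dimension) and quantifying how the exceptional set of bad directions in the multi-parameter projection is defeated by this expansion — this is where the argument is genuinely deeper than the classical projection theorems.
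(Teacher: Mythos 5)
Note first that the paper does not prove this statement at all: it is quoted as background, with the proofs credited to Edgar--Miller \cite{EM02} and Bourgain \cite{B03}. So your proposal must be judged against what a complete proof actually requires. Your Steinhaus reduction ($\mathcal{L}^1(R)>0 \Rightarrow R=\mathbb{R}$) is correct, and your reading of $R\cdot R+\cdots+R\cdot R\subseteq R$ as a multi-parameter projection of $R^{2k}$ with directions constrained to lie in $R^k$ is exactly the philosophy of this paper. Indeed, in the regime $\dim_{\mathcal H}(R)>\frac{1}{2}$ the paper's own machinery turns your sketch into a genuine proof with no sum-product black box: choose $d$ so large that $\frac{1}{2}+\frac{1}{2(2d-1)}<\dim_{\mathcal H}(R)$; the second item of Corollary \ref{mainsumproduct} gives $\mathcal{L}^1(a_1R+\cdots+a_dR)>0$ for $\mu_R\times\cdots\times\mu_R$-a.e.\ $(a_1,\dots,a_d)\in R^d$, and the ring axioms place $a_1R+\cdots+a_dR$ inside $R$, so Steinhaus finishes. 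You did not make this closing move, instead gesturing at ``driving the measure of $R$ off zero.''

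For $0<\dim_{\mathcal H}(R)\leq\frac{1}{2}$ your argument has genuine gaps. First, the discretized sum-product theorem you invoke is not in Bourgain--Katz--Tao (that paper concerns finite fields); it is the Katz--Tao conjecture \cite{KT01} proved by Bourgain in \cite{B03} --- that is, it \emph{is} the hard content of the theorem you are proving, so invoking it as a black box makes the proposal essentially circular. Second, even granting it, your deduction ``expansion contradicts $R+R\subseteq R$, $R\cdot R\subseteq R$'' fails at a single scale: Hausdorff dimension bounds $R$ from below via a Frostman measure but gives \emph{no upper bound} on the covering numbers $N(R\cap[1,2],\delta)$, which can be as large as $\delta^{-1}$ even when $\dim_{\mathcal H}(R)$ is tiny; hence $|A+A|_{\delta}\geq\delta^{-\varepsilon}{|A|}_{\delta}$ together with $A+A\subseteq R$ yields no contradiction. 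Bridging this (pigeonholing scales against the Frostman measure and iterating sums of products to convert expansion into measure growth) is precisely the nontrivial reduction in \cite{KT01} and \cite{B03}, which your sketch names as ``the delicate part'' but does not supply. Third, the endgame is also gapped independently: $\dim_{\mathcal H}(R)=1$ does not imply $\mathcal{L}^1(R)>0$, and the sum-product gain $c(\sigma)$ degenerates as $\sigma\to 1$, so ``ruling out every dimension strictly between $0$ and $1$'' does not finish the proof; the projection route above is what converts dimension strictly above $\frac{1}{2}$ directly into positive measure.
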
 

Bourgain (\cite{B03}) proved the following quantitative bound that was conjectured in (\cite{KT01}). 
\begin{theorem} Suppose that $A \subset {\Bbb R}$ is a $(\delta, \sigma)$-set in the sense that $A$ is a union of $\delta$-intervals and for $0 < \epsilon \ll 1$,
$$ |A \cap I|<{\left( \frac{r}{\delta} \right)}^{1-\sigma} \delta^{1-\epsilon}$$ whenever $I$ is an arbitrary interval of size $\delta \leq r \leq 1$. Suppose that $0<\sigma<1$ and $|A|>\delta^{\sigma+\epsilon}$. Then 
$$ |A+A|+|A \cdot A|>\delta^{\sigma-c}$$ for an absolute constant $c=c(\sigma)>0$. 
\end{theorem}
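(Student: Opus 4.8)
I would argue by contraposition, showing that if the sumset \emph{and} the product set of a $(\delta,\sigma)$-set are both essentially no larger than $A$ itself, then $A$ cannot be genuinely spread across scales, contradicting the non-concentration hypothesis. Concretely, suppose $|A+A| + |A \cdot A| \leq \delta^{\sigma-c}$. Together with $|A| > \delta^{\sigma+\epsilon}$ this yields the doubling bounds $|A+A| \lesssim K|A|$ and $|A \cdot A| \lesssim K|A|$ with $K = \delta^{-c-\epsilon}$, a \emph{small} power of $\delta^{-1}$ once $c$ and $\epsilon$ are small. After discretizing $A$ into its constituent $\delta$-intervals, the hypothesis $|A \cap I| < (r/\delta)^{1-\sigma}\delta^{1-\epsilon}$ becomes a Frostman-type estimate: every interval of length $\delta \leq r \leq 1$ meets at most $(r/\delta)^{1-\sigma}\delta^{-\epsilon}$ of these intervals. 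The goal is to pick $c = c(\sigma) > 0$ so small that these small-doubling bounds become incompatible with this spreading.

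The engine I would use is the Elekes incidence mechanism adapted to the fractal scale. Consider the lines $\ell_{a,b} = \{(x,y) : y = a(x-b)\}$ with $a,b$ ranging over the $\delta$-intervals of $A$. For every $t \in A$ the point $(b+t,\, at)$ lies on $\ell_{a,b}$ and belongs to $(A+A) \times (A \cdot A)$, since $b+t \in A+A$ and $at \in A \cdot A$. Counting incidences one way gives about $|A|^{3}\delta^{-3}$ coincidences at scale $\delta$ (there are $\approx (|A|\delta^{-1})^{2}$ distinct lines, each carrying $\approx |A|\delta^{-1}$ such points), whereas a \emph{discretized Szemer\'edi--Trotter bound}, in which the Frostman condition prevents the points and the $\delta$-tubes from clustering, caps the same count in terms of $|A+A|$, $|A \cdot A|$, and the number of lines. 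Comparing the two forces $|A+A| \cdot |A \cdot A| \gtrsim |A|^{2+s(\sigma)}$ for some $s(\sigma)>0$ --- genuine expansion --- which contradicts the small-doubling bound $|A+A| \cdot |A \cdot A| \lesssim K^{2}|A|^{2}$ once $c$ is small. Pl\"unnecke--Ruzsa sumset estimates and the Balog--Szemer\'edi--Gowers theorem, in their discretized covering-number forms, are the natural tools for passing beforehand to a large, structured subset $A' \subseteq A$ on which this incidence input is cleanest, at the cost of only subpolynomial factors $K^{O(1)}$.

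The genuinely hard part is the discretized incidence bound itself, together with its uniformity across scales. Unlike the classical Szemer\'edi--Trotter theorem, the estimate I need must hold for $\delta$-fattened points and tubes, where one cannot simply count transversal intersections; the non-concentration exponent $1-\sigma$ is exactly what excludes the lattice-like clustering that would otherwise destroy such a bound, and extracting a quantitative gain from it is delicate. Moreover the configuration produces structure at a whole range of intermediate scales $\delta \leq r \leq 1$, so the decisive step is an induction-on-scales (bootstrapping) scheme that upgrades a weak, scale-local gain into a definite power $\delta^{-c}$, while keeping all the $K^{O(1)}$ losses from the additive-combinatorial reductions subpolynomial in $\delta^{-1}$. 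It is this self-improvement, with a careful accounting of exponents, that produces the absolute constant $c = c(\sigma) > 0$; morally it is the quantitative incarnation of the statement that ${\Bbb R}$ has no sub-rings of Hausdorff dimension strictly between $0$ and $1$.
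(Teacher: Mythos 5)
You should first note that the paper itself contains no proof of this statement: it is quoted verbatim as Bourgain's theorem from \cite{B03} (resolving the Katz--Tao conjecture of \cite{KT01}), so any comparison must be with Bourgain's argument, not with anything in this paper.

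Your sketch has a genuine gap at its core, and you in effect concede it: the entire argument is driven by a ``discretized Szemer\'edi--Trotter bound'' with a power gain for $\delta$-separated points and $\delta$-tubes under the non-concentration hypothesis, and you give no proof of it. This is not a routine technical lemma. At the time of \cite{B03} no such incidence theorem was known, and the work of Katz and Tao (\cite{KT01}) shows that discretized incidence/Furstenberg statements of exactly this type are essentially \emph{equivalent} in strength to the discretized ring conjecture you are trying to prove --- so, at the level of detail you provide, the argument is circular: the ``engine'' is a theorem of the same depth as the target. The trivial incidence count for $\delta$-tubes, or anything extractable from the classical Szemer\'edi--Trotter theorem by naive discretization, loses exactly the polynomial factors you need to beat, and the non-concentration exponent $1-\sigma$ by itself does not hand you the gain; producing any $\epsilon$-improvement here was the open problem. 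Your induction-on-scales and BSG/Pl\"unnecke--Ruzsa scaffolding is sensible but cannot repair a missing main estimate.

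For comparison, Bourgain's actual proof avoids incidence geometry altogether. It works directly with iterated sum and product sets: assuming $|A+A|$ and $|A\cdot A|$ are both small, one uses Gowers' quantitative Balog--Szemer\'edi theorem and Pl\"unnecke--Ruzsa covering to control long sums of dilates of the form $a_1A\pm a_2A\pm\cdots\pm a_kA$ (note this is precisely the object $A\cdot A-A\cdot A+A\cdot A-A\cdot A$ highlighted in the present paper), and then runs a multi-scale analysis --- decomposing $A$ along a tree of dyadic scales and showing that smallness of these iterated sets forces measure concentration at some scale $\delta\leq r\leq 1$, contradicting the $(\delta,\sigma)$ non-concentration hypothesis. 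So the additive-combinatorial reductions you mention are indeed part of the real proof, but the decisive mechanism is this dilate/multi-scale argument, not a discretized incidence theorem.
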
 

One of the key steps in the proof is the study of the size of 
$$A \cdot A-A \cdot A+A \cdot A-A \cdot A$$ and this brings us to the main results of this paper. Our goal is to show that if the Hausdorff dimension of $A \subset {\Bbb R}$ is sufficiently large, then 
$${\mathcal L}^1(a_1A+a_2A+\dots+a_dA)>0$$ for a generic choice of $(a_1, \dots, a_d) \in A \times A \times \dots \times A$.  It is of note that much work has been done in this direction in the setting of finite fields. See \cite{HIsumprod}, \cite{HIKR10} and the references contained therein. In particular, it is the work in \cite{HIsumprod} that inspired some of the following results.

Our results are proved using generalized projections theorems, similar in flavor to the ones previously obtained by Peres and Schlag (\cite{PS00}) and Solomyak (\cite{Sol98}). 

\begin{theorem} \label{mainprojection} Let $E, F \subset {\Bbb R}^d$, $d \geq 2$, be of Hausdorff dimension $s_E, s_F$, respectively. Suppose that there exist Frostman measures $\mu_E$, $\mu_F$, supported on $E$ and $F$, respectively, such that for $\delta$ sufficiently small and $|\xi|$ sufficiently large, there exist non-negative numbers $\gamma_F$ and $l_F$ such that the following conditions hold: \begin{itemize} 
\item \begin{equation} \label{decay} |\widehat{\mu}_F(\xi)| \lessapprox {|\xi|}^{-\gamma_F} \end{equation} and 
\item \begin{equation} \label{tubesize} \mu_F(T_{\delta}) \lessapprox \delta^{s_F-l_F}, \end{equation} for every tube $T_{\delta}$ of length $\approx 1$ and radius $\approx \delta$ emanating from the origin. 
\end{itemize}

Define, for each $y \in F$, the projection set
$$ \pi_y(E)=\{x \cdot y: x \in E\}.$$ 

Suppose that for some $0< \alpha \leq 1$, 
\begin{equation} \label{maincondition} \max \left\{ \frac{\min\{\gamma_F,s_E\}}{\alpha}, \frac{s_E+s_F-l_F+1-\alpha}{d} \right\}>1.\end{equation} Then 
$$ \dim_{{\mathcal H}}(\pi_y(E)) \geq \alpha$$ for $\mu_F$-every $y \in F$. 

If \eqref{maincondition} holds with $\alpha=0$, then 
$$ {\mathcal L}^1(\pi_y(E))>0$$ for $\mu_F$-every $y \in F$.
\end{theorem}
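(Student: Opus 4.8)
The plan is to pass to the push-forward measures $\nu_y:=(\pi_y)_{\ast}\mu_E$ on $\mathbb R$ and to estimate, on $\mu_F$-average in $y$, their Riesz energies $I_\alpha(\nu_y)$ (for the dimension statement) or their squared $L^2$-norms (for the Lebesgue statement). Everything rests on the elementary identity
\[
\widehat{\nu_y}(\xi)=\int e^{-2\pi i \xi\,(x\cdot y)}\,d\mu_E(x)=\widehat{\mu_E}(\xi y),
\]
which reads the spectrum of $\nu_y$ off $\widehat{\mu_E}$ along the ray $\mathbb R y$. Since $\dim_{\mathcal H}(\pi_y(E))\ge\alpha$ whenever $I_\alpha(\nu_y)<\infty$, and since $I_\alpha(\nu_y)=c_\alpha\int_{\mathbb R}|\widehat{\nu_y}(\xi)|^2|\xi|^{\alpha-1}\,d\xi$, Fubini reduces the whole theorem to showing that
\[
\mathcal E:=\int_{\mathbb R}|\xi|^{\alpha-1}A(|\xi|)\,d\xi<\infty,\qquad A(R):=\int|\widehat{\mu_E}(Ry)|^2\,d\mu_F(y),
\]
for then $I_\alpha(\nu_y)<\infty$ for $\mu_F$-a.e.\ $y$. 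The contribution of $|\xi|\le1$ is harmless because $|\widehat{\mu_E}|\le1$ and $|\xi|^{\alpha-1}$ is integrable at the origin for $\alpha>0$, so the entire problem is the rate of decay of $A(R)$ as $R\to\infty$, and the two terms inside the maximum in \eqref{maincondition} correspond to two different ways of producing that decay.

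For the first term I would exploit the Fourier decay \eqref{decay}. Writing $\sigma:=\mu_E\ast\mu_E^{-}$ (with $\mu_E^{-}$ the reflection of $\mu_E$), so that $\widehat{\sigma}=|\widehat{\mu_E}|^2$ and $\sigma$ is supported on $E-E$, an application of Fubini gives
\[
A(R)=\int\widehat{\mu_F}(Rz)\,d\sigma(z).
\]
Using \eqref{decay} in the form $|\widehat{\mu_F}(Rz)|\lessapprox (R|z|)^{-\beta}$ for any $0\le\beta\le\gamma_F$, together with the finiteness of the energy $\int|z|^{-\beta}\,d\sigma(z)=I_\beta(\mu_E)<\infty$ (valid for $\beta<\dim_{\mathcal H}E$, hence for $\beta=\min\{\gamma_F,s_E\}$ after slightly lowering $s_E$), yields $A(R)\lessapprox R^{-\min\{\gamma_F,s_E\}}$. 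Inserted into $\mathcal E$ this gives convergence exactly when $\alpha<\min\{\gamma_F,s_E\}$, i.e.\ precisely when the first term of \eqref{maincondition} exceeds $1$. Note that the tube hypothesis plays no role here.

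For the second term I would instead use \eqref{tubesize}, working on the Fourier side where it is natural. After rescaling one may assume $\mu_F$ is supported where $|y|\approx1$; the tube condition then says exactly that the direction map $y\mapsto y/|y|$ pushes $\mu_F$ to a Frostman measure of exponent $s_F-l_F$ on $S^{d-1}$, since a $\delta$-cap pulls back into a tube $T_\delta$ from the origin. Passing to polar coordinates in $\eta=\xi y$ exhibits $\mathcal E$ as the integral of $|\widehat{\mu_E}|^2$ against the push-forward $\Theta$ of $|\xi|^{\alpha-1}\,d\xi\,d\mu_F(y)$ under $(\xi,y)\mapsto\xi y$; the crucial point is that the radial integration contributes one extra dimension, so that $\Theta$ obeys the scale-localized Frostman bound
\[
\Theta\big(B(\eta,\rho)\big)\lessapprox |\eta|^{\alpha-1-(s_F-l_F)}\,\rho^{\,1+(s_F-l_F)}\qquad (|\eta|\approx R).
\]
Decomposing into dyadic shells $|\eta|\approx R$ and pairing this estimate against the standard shell mass $\int_{|\eta|\approx R}|\widehat{\mu_E}(\eta)|^2\,d\eta\lessapprox R^{\,d-s_E}$ (which encodes the Frostman property of $\mu_E$) produces a per-shell bound $\lessapprox R^{\alpha-1-(s_F-l_F)+d-s_E}$, whose dyadic sum converges precisely when $\alpha<s_E+s_F-l_F+1-d$, i.e.\ when the second term of \eqref{maincondition} exceeds $1$.

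Putting the two regimes together, \eqref{maincondition} is exactly the statement that $A$ decays fast enough for one of the two mechanisms to force $\mathcal E<\infty$, whence $\dim_{\mathcal H}(\pi_y(E))\ge\alpha$ for $\mu_F$-a.e.\ $y$. For the final assertion I would run the identical computation with the weight $|\xi|^{\alpha-1}$ replaced by the constant weight $1$: finiteness of $\int_{\mathbb R}A(|\xi|)\,d\xi=\int_{\mathbb R}\int|\widehat{\mu_E}(\xi y)|^2\,d\mu_F(y)\,d\xi$ under \eqref{maincondition} with $\alpha=0$ shows that $\nu_y\in L^2(\mathbb R)$ for $\mu_F$-a.e.\ $y$, so $\nu_y$ is absolutely continuous with $L^2$ density and hence ${\mathcal L}^1(\pi_y(E))>0$. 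The main obstacle is the tube estimate of the third paragraph: the honest single-scale bound for $A(R)$ is a full power of $R$ too weak, and recovering the sharp exponent---equivalently, the extra $+1$ in $s_F-l_F+1$---requires genuinely using the radial integration and the Frostman character of the direction measure together, rather than estimating $A(R)$ for each $R$ in isolation. This is exactly the point where the Peres--Schlag type transversality enters, and where the $\lessapprox$ (the harmless $|\xi|^{\epsilon}$ losses) must be tracked with care.
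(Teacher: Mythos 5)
Your proposal is correct and takes essentially the same route as the paper's proof: the same pushforward identity $\widehat{\nu}_y(t)=\widehat{\mu}_E(ty)$, the same Fubini-plus-energy argument for the decay mechanism (the paper splits $\int\int\widehat{\mu}_F(t(u-v))\,d\mu_E(u)\,d\mu_E(v)$ at $|u-v|\approx t^{-1}$, which is your $\sigma=\mu_E*\mu_E^{-}$ computation), and your pushforward measure $\Theta$ with its scale-localized Frostman bound is precisely the paper's uncertainty-principle step ($\widehat{\mu}_E=\widehat{\mu}_E*\widehat{\phi}$, dyadic decomposition of $|\widehat{\phi}|$ with $2^{-mn}$ weights, and $\mu_F\times\mathcal{L}^1\{(y,t):|ty-\xi|\leq 2^m\}\lesssim 2^m\mu_F\left(T_{2^m{|\xi|}^{-1}}(\xi)\right)$) in repackaged form, with your closing remark correctly identifying the unit-scale mollification forced by the compact support of $\mu_E$ as what licenses pairing $\Theta$ against shell averages of ${|\widehat{\mu}_E|}^2$. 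One bookkeeping caveat, inherited from the theorem's own wording rather than a flaw in your argument: the weight-$1$ computation for the $\mathcal{L}^1$ conclusion converges under \eqref{maincondition} with $\alpha=1$ (note that with $\alpha=0$ the first term of the maximum is vacuously infinite), and indeed the applications in Corollary \ref{mainsumproduct} use the $\alpha=1$ form of the condition.
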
 

\begin{remark} Observe that the conditions of Theorem \ref{mainprojection} always hold with $\gamma_F=0$ and $l_F=1$ since every tube $T_{\delta}$ can be decomposed into $\approx \delta^{-1}$ balls of radius $\delta$. \end{remark} 

\begin{corollary} \label{mainsumproduct} Let $A \subset {\Bbb R}$ and let $\mu_A$ be a Frostman measure on $A$. Then the following hold: \begin{itemize} 
\item Suppose that the Hausdorff dimension of $A$, denoted by $dim_{{\mathcal H}}(A)$, is greater than 
$\frac{1}{2}+\epsilon$ for some $0<\epsilon \leq \frac{1}{2}$. Then 
for $\mu_A \times \mu_A \times \dots \times \mu_A$-every $(a_1, a_2, \dots, a_d) \in A \times A \times \dots \times A$, 
\begin{equation} \label{oralpleasure} \dim_{{\mathcal H}}(a_1A+a_2A+\dots+a_dA) \ge
\min \left\{ 1, \frac{1}{2}+\epsilon(2d-1) \right\}.\end{equation}

\item Suppose that the Hausdorff dimension of $A$ is greater than 
$$\frac{1}{2}+\frac{1}{2(2d-1)}$$ for some $d \geq 2$. Then for $\mu_A \times \mu_A \times \dots \times \mu_A$-every $(a_1, a_2, \dots, a_d) \in A \times A \times \dots \times A$, 
\begin{equation} \label{kickass1} {\mathcal L}^1(a_1A+a_2A+\dots+a_dA)>0.\end{equation}

\item Suppose that $F \subset {\Bbb R}^d$, $d \geq 2$, is star-like in the sense that the intersection of $F$ with every tube of width $\delta$ containing the origin is contained in a fixed number of balls of radius $\delta$. Assume that 
$$ \dim_{{\mathcal H}}(A)+\frac{dim_{{\mathcal H}}(F)}{d}>1. $$ Then for $\mu_F$-every $x \in F$, 
\begin{equation} \label{kickass2} {\mathcal L}^1(x_1A+x_2A+\dots+x_dA)>0.\end{equation} 

In particular, if $dim_{{\mathcal H}}(A)=\frac{1}{2}+\epsilon$, for some $\epsilon>0$, and $dim_{{\mathcal H}}(F)> \frac{d}{2}-\epsilon$, then \eqref{kickass2} holds for $\mu_F$-every $x \in F$. 

\item Suppose that $F \subset {\Bbb R}^d$, $d \geq 2$, possesses a Borel measure $\mu_F$ such that \eqref{decay} holds with $\gamma_F>1$. Suppose that $dim_{{\mathcal H}}(A)>\frac{1}{2}$. Then for 
$\mu_F$-every $x \in F$, 
$$ {\mathcal L}^1(x_1A+x_2A+\dots+x_dA)>0.$$ 
\end{itemize} 
\end{corollary}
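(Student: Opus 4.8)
The plan is to realize every weighted sumset $x_1 A + \cdots + x_d A$ as a dot-product projection of the Cartesian power $E := A \times \cdots \times A \subset {\Bbb R}^d$ and to feed the appropriate data into Theorem \ref{mainprojection}. For $y = (x_1, \dots, x_d)$ one has $\pi_y(E) = \{x \cdot y : x \in E\} = x_1 A + \cdots + x_d A$, so all four items are instances of the projection theorem with this single set $E$. Write $s := \dim_{\mathcal H}(A)$ and fix a Frostman measure $\mu_A$ on $A$ of exponent arbitrarily close to $s$; the product measure $\mu_E := \mu_A \times \cdots \times \mu_A$ then satisfies $\mu_E(B(x,r)) \lessapprox r^{ds}$, so we run the theorem with $s_E = ds$. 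The four bullets differ only in the target set $F$ and the resulting parameters $\gamma_F, l_F$.

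For the first two items take $F = E = A^d$ with $\mu_F = \mu_E$, so that $s_F = ds$, and use $\gamma_F = 0$ (legitimate by the Remark). The one estimate that matters is the tube exponent \eqref{tubesize}. Fix a tube $T_\delta$ through the origin in a direction $\theta$ and slice in the coordinate $x_1$: membership in $T_\delta$ forces each of the remaining $d-1$ coordinates to lie within $\lessapprox \delta$ of the corresponding point of the line ${\Bbb R}\theta$, so the Frostman bound $\mu_A(I) \lessapprox \delta^s$ applied coordinatewise gives $\mu_F(T_\delta) \lessapprox \delta^{(d-1)s} = \delta^{s_F - s}$; thus \eqref{tubesize} holds with $l_F = s$. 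With these values the second alternative in \eqref{maincondition} reads $\frac{(2d-1)s + 1 - \alpha}{d} > 1$, that is $\alpha < (2d-1)s - (d-1)$, so $\dim_{\mathcal H}(\pi_y(E)) \geq \min\{1, (2d-1)s - (d-1)\}$; substituting $s = \frac12 + \epsilon$ gives exactly \eqref{oralpleasure}. Item two is the endpoint of the same computation: once the admissible exponent $(2d-1)s - (d-1)$ reaches $1$, equivalently $s > \frac{d}{2d-1} = \frac12 + \frac{1}{2(2d-1)}$, Theorem \ref{mainprojection} upgrades the conclusion to ${\mathcal L}^1(\pi_y(E)) > 0$, which is \eqref{kickass1}.

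The third item changes only the tube exponent. The star-like hypothesis says that $F \cap T_\delta$ is covered by $O(1)$ balls of radius $\delta$, each of $\mu_F$-mass $\lessapprox \delta^{s_F}$, so \eqref{tubesize} now holds with $l_F = 0$. Retaining $\gamma_F = 0$ and $s_E = ds$, the endpoint form of the second alternative in \eqref{maincondition} becomes $\frac{ds + s_F}{d} > 1$, i.e. $\dim_{\mathcal H}(A) + \frac{\dim_{\mathcal H}(F)}{d} > 1$, which is precisely the stated assumption, and \eqref{kickass2} follows. The "in particular" clause is immediate: with $s = \frac12 + \epsilon$ and $\dim_{\mathcal H}(F) > \frac{d}{2} - \epsilon$ one has $s + \frac{\dim_{\mathcal H}(F)}{d} > 1 + \epsilon\left(1 - \frac1d\right) > 1$.

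The fourth item uses the first alternative in \eqref{maincondition} rather than the second. Here $\mu_F$ satisfies the decay \eqref{decay} with $\gamma_F > 1$, and since $\dim_{\mathcal H}(A) > \frac12$ and $d \geq 2$ we have $s_E = ds > \frac{d}{2} \geq 1$; hence $\min\{\gamma_F, s_E\} > 1$, so the first term of \eqref{maincondition} exceeds $1$ in the same endpoint regime and Theorem \ref{mainprojection} again forces ${\mathcal L}^1(\pi_y(E)) > 0$. The genuinely delicate step throughout is the tube estimate of the second paragraph: the crude value $l_F = 1$ from the Remark is far too lossy, and it is the product structure of $\mu_F$, which decouples an essentially $(d-1)$-dimensional transversality condition into $d-1$ independent one-dimensional Frostman estimates, that produces the sharp exponent $l_F = s$ and hence the sharp thresholds above. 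One must also check that non-generic directions (a tube nearly parallel to a coordinate axis) do not worsen the bound, but such tubes still constrain $d-1$ coordinates to $\delta$-intervals, so $\mu_F(T_\delta) \lessapprox \delta^{(d-1)s}$ holds uniformly.
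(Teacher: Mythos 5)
Your proposal is correct and takes essentially the same route as the paper: you realize $x_1A+\cdots+x_dA$ as the dot-product projection $\pi_y(A^d)$ and prove the key tube estimate $\mu_{A^d}(T_\delta)\lessapprox \delta^{(d-1)s_A}$ by the same coordinate-slicing argument as the paper's Lemma \ref{tube}, yielding $l_F=s_A$. Feeding $\gamma_F=0$, $l_F=s_A$ into \eqref{maincondition} for the first two items, $l_F=0$ for the star-like case, and the first alternative with $\min\{\gamma_F,s_E\}>1$ for the last item matches the paper's application of Theorem \ref{mainprojection} exactly.
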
 

\subsection{Applications to the finite point configurations} 

Recall that the celebrated Falconer distance conjecture (\cite{Falconer86}) says that if the Hausdorff dimension of $E \subset {\Bbb R}^d$, $d \ge 2$, is $>\frac{d}{2}$, then the Lebesgue measure of the set of distances $\{|x-y|: x,y \in E\}$ is positive. The best known result to date, due to Wolff (\cite{Wolff99}) in two dimension and Erdogan (\cite{Erdogan05}) in higher dimensions says that the Lebesgue measure of the distance set is indeed positive if the Hausdorff dimension of $E$ is greater than $\frac{d}{2}+\frac{1}{3}$. 

\begin{corollary} \label{mainspherepinneddistance} Let 
$$E \subset S^{d-1}=\left\{x \in {\Bbb R}^d: \sqrt{x_1^2+x_2^2+\dots+x_d^2}=1 \right\}$$ of Hausdorff dimension $>\frac{d}{2}$. Let $\mu_E$ be a Frostman measure on $E$. Then 
$$ {\mathcal L}^1(\{|x-y|: x \in E\})>0$$ for $\mu$-every $y \in E$. 
\end{corollary}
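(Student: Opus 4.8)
The plan is to convert the pinned distance problem on the sphere into the projection estimate supplied by Theorem~\ref{mainprojection}. The essential point is the polarization identity: for $x,y\in S^{d-1}$,
$$|x-y|^2 = |x|^2 - 2\,x\cdot y + |y|^2 = 2\left(1 - x\cdot y\right),$$
so that $|x-y| = \sqrt{2}\,\sqrt{1-x\cdot y}$ depends on $x$ only through the inner product $x\cdot y$. Fixing $y\in E$, the pinned distance set $\{|x-y|:x\in E\}$ is thus the image of the projection set $\pi_y(E)=\{x\cdot y:x\in E\}$ under $\phi(t)=\sqrt{2(1-t)}$. Since $\phi$ is smooth with $\phi'$ bounded and bounded away from $0$ on each compact subset of $[-1,1)$, it is bi-Lipschitz there, and hence maps sets of positive one-dimensional Lebesgue measure to sets of positive Lebesgue measure. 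The only degeneracy is at $t=1$, i.e.\ $x=y$, where $\phi'$ blows up; but this is a single point and carries no measure. Therefore it suffices to show that $\mathcal{L}^1(\pi_y(E))>0$ for $\mu$-every $y\in E$.

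To obtain this I would apply Theorem~\ref{mainprojection} with $F=E$ and $\mu_F=\mu_E=\mu$. Since $\dim_{\mathcal H}(E)>d/2$, take the Frostman measure $\mu$ on $E$ to have some exponent $s'>d/2$, so that $\mu(B(x,r))\lesssim r^{s'}$ and $s_E=s_F=s'$. For the two structural hypotheses I would use the trivial choices permitted by the Remark following the theorem: the decay \eqref{decay} holds with $\gamma_F=0$ because $\mu$ is a probability measure, and the tube estimate \eqref{tubesize} holds with $l_F=1$ because any tube $T_{\delta}$ of length $\approx 1$ and radius $\approx \delta$ is covered by $\approx \delta^{-1}$ balls of radius $\delta$, each of $\mu$-mass $\lesssim \delta^{s'}$.

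With $s_E=s_F=s'$, $l_F=1$, and $\alpha=0$, the second entry in the maximum \eqref{maincondition} is
$$\frac{s_E+s_F-l_F+1}{d} = \frac{2s'}{d},$$
which exceeds $1$ precisely because $s'>d/2$. Hence \eqref{maincondition} holds with $\alpha=0$, and Theorem~\ref{mainprojection} gives $\mathcal{L}^1(\pi_y(E))>0$ for $\mu$-every $y\in E$. Feeding this through the bi-Lipschitz reduction of the first paragraph yields $\mathcal{L}^1(\{|x-y|:x\in E\})>0$ for $\mu$-every $y$, which is the assertion of the corollary.

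I do not expect a serious obstacle here, since the analytic substance is entirely contained in Theorem~\ref{mainprojection}; within the corollary the only points requiring attention are the harmless degeneracy at the diagonal $x=y$ in the passage from projections to distances, and the fact that the inner-product reduction is available, which is special to the sphere. I would also note that $S^{d-1}$ is \emph{star-like} in the sense of Corollary~\ref{mainsumproduct}: a tube through the origin meets the sphere in at most two caps of radius $\approx\delta$, so in fact $\mu(T_{\delta})\lesssim\delta^{s'}$ and one may take $l_F=0$. This improves the second entry of \eqref{maincondition} to $(2s'+1)/d$ and lowers the required threshold from $d/2$ to $(d-1)/2$; the cleaner value $d/2$ quoted in the statement mirrors the threshold in the Falconer distance conjecture.
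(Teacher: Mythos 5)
Your reduction from pinned distances to projections via $|x-y|^2=2(1-x\cdot y)$ is exactly right and is the mechanism the paper itself relies on: this corollary is in effect the $k=1$ case of Theorem \ref{mainspherekpoint}, whose proof runs the same projection argument for $\pi_y(E)=\{x\cdot y: x\in E\}$. The degeneracy at $x=y$ is handled correctly (even more simply: $\phi^{-1}(u)=1-u^2/2$ is Lipschitz, and a map with Lipschitz inverse cannot send a set of positive measure to a null set). The genuine gap is in your application of Theorem \ref{mainprojection}: you invoke the clause ``\eqref{maincondition} with $\alpha=0$'' together with the trivial tube bound $l_F=1$. But that clause, read literally, is a misstatement in the theorem: positivity of ${\mathcal L}^1(\pi_y(E))$ requires $\widehat{\nu}_y\in L^2(dt)$, i.e.\ the weight $t^{-1+\alpha}$ in the proof must be $t^0$, which corresponds to $\alpha=1$, not $\alpha=0$; with the weight $t^{-1}$ the finiteness of the integral gives no absolute continuity at all. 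That $\alpha=1$ is the intended condition is confirmed by every other application in the paper: the threshold $\frac{1}{2}+\frac{1}{2(2d-1)}=\frac{d}{2d-1}$ in the second bullet of Corollary \ref{mainsumproduct} is exactly what \eqref{maincondition} yields with $\alpha=1$, $s_E=s_F=ds_A$, $l_F=s_A$, and the star-like condition $\dim_{{\mathcal H}}(A)+\dim_{{\mathcal H}}(F)/d>1$ in the third bullet is \eqref{maincondition} with $\alpha=1$ and $l_F=0$. Under the correct reading, your primary choice $l_F=1$ gives $(2s'-1)/d>1$, i.e.\ the threshold $s'>\frac{d+1}{2}$, which misses the claimed $\frac{d}{2}$.

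The repair is precisely the observation you relegate to your closing remark: since $F=E\subset S^{d-1}$, a $\delta$-tube through the origin meets the sphere in at most two caps of radius $\approx\delta$, so $\mu(T_{\delta})\lesssim \delta^{s'}$ and one may take $l_F=0$. Then \eqref{maincondition} with $\alpha=1$ reads $2s'/d>1$, exactly $s'>\frac{d}{2}$, and the corollary follows; this cap estimate is the same place where the paper's proof of Theorem \ref{mainspherekpoint} uses the spherical hypothesis, in the form $\int\int |\widehat{\phi}(ty-\xi)|\,dt\,d\mu(y)\lesssim |\xi|^{-s}$. By the same token, your claimed further improvement of the threshold to $\frac{d-1}{2}$ is an artifact of the $\alpha=0$ misreading and should be discarded: within this method the spherical geometry buys you exactly $\frac{d}{2}$, the threshold stated in the corollary. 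So the skeleton of your argument is the paper's, but the spherical tube estimate must be promoted from an aside to the load-bearing step, and the exponent bookkeeping redone with $\alpha=1$.
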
 

Before stating our next result, we need the following definition. Let $T_k(E)$, $1 \leq k \leq d$, denote the $(k+1)$-fold Cartesian product of $E$ with the equivalence relation where $(x^1, \dots, x^{k+1}) \sim (y^1, \dots, y^{k+1})$ if there exists a translation $\tau$ and an orthogonal transformation $O$ such that $y^j=O(x^j)+\tau$. 

In analogy with the Falconer distance conjecture, it is reasonable to ask how large the Hausdorff dimension of $E \subset {\Bbb R}^d$, $d \ge 2$, needs to be to ensure that the ${k+1 \choose 2}$-dimensional Lebesgue measure of $T_k(E)$ is positive. 

\begin{theorem} \label{mainspherekpoint} Let $E \subset S_t=\{x \in {\Bbb R}^d: |x|=t\}$ for some $t>0$. Suppose that $dim_{{\mathcal H}}(E)>\frac{d+k-1}{2}$. Then 
$$ {\mathcal L}^{k+1 \choose 2} (T_k(E))>0.$$ 
\end{theorem}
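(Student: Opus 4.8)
The plan is to induct on $k$ and to exploit the fact that, for points on a sphere, pairwise distances are affine functions of inner products. After rescaling we may take $t=1$, and for $x,y\in S_t$ we have $|x-y|^2=2t^2-2\,x\cdot y$. Consequently the congruence class of a configuration $(x^1,\dots,x^{k+1})\in E^{k+1}$ is determined by its inner-product vector $(x^i\cdot x^j)_{1\le i<j\le k+1}\in\mathbb{R}^{\binom{k+1}{2}}$, and the passage from distance data to inner-product data is a coordinatewise diffeomorphism off the degenerate locus where some $x^i\cdot x^j=t^2$; it therefore suffices to prove that the set of inner-product vectors carries positive $\binom{k+1}{2}$-dimensional Lebesgue measure. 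I would organize these $\binom{k+1}{2}$ coordinates by building the configuration one vertex at a time: with $x^1,\dots,x^j$ already placed, the vertex $x^{j+1}$ contributes the $j$ new coordinates $(x^{j+1}\cdot x^1,\dots,x^{j+1}\cdot x^j)$, and $1+2+\dots+k=\binom{k+1}{2}$ exhausts them. The base case $k=1$ is precisely Corollary~\ref{mainspherepinneddistance}, in which the sphere is star-like and Theorem~\ref{mainprojection} applies to $E=F\subset S_t$ with $\alpha=0$ and threshold $d/2=\tfrac{d+1-1}{2}$.

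For the inductive step I would strengthen the hypothesis to absolute continuity: assume that the pushforward of $\mu^{k}$ (where $\mu=\mu_E$) under the inner-product map is absolutely continuous with an $L^2$ density, which holds for $\dim_{\mathcal H}(E)>\tfrac{d+k-2}{2}$, and prove the same for $k+1$ points. Fixing the first $k$ vertices, the essential claim is that for $\mu^{k}$-almost every non-degenerate tuple $(x^1,\dots,x^k)$ the pushforward of $\mu$ under the projection $P_{x^1,\dots,x^k}(x)=(x\cdot x^1,\dots,x\cdot x^k)\in\mathbb{R}^k$ is absolutely continuous with $L^2$ density, provided $s:=\dim_{\mathcal H}(E)>\tfrac{d+k-1}{2}$. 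This is the multi-parameter analogue of Theorem~\ref{mainprojection}, and the mechanism is Fourier-analytic: since $\widehat{P_*\mu}(\eta)=\widehat{\mu}\big(\sum_{l=1}^{k}\eta_l x^l\big)$, one is led to estimate
\begin{equation}
\int_{E^k}\int_{\mathbb{R}^k}\big|\widehat{P_*\mu}(\eta)\big|^2\,d\eta\,d\mu^k
=\int_{E^k}\frac{1}{\sqrt{\det G(x)}}\int_{V(x)}|\widehat{\mu}(\xi)|^2\,d\mathcal{H}^k(\xi)\,d\mu^k,
\end{equation}
obtained by the change of variables $\xi=\sum_l\eta_l x^l$, where $G(x)$ is the Gram matrix of $x^1,\dots,x^k$ and $V(x)$ their linear span. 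Reconstructing the right-hand side as a weighted integral of $|\widehat{\mu}|^2$ over $\mathbb{R}^d$ and invoking the $L^2$-averaged Fourier decay of Frostman measures on $S_t$ should yield finiteness exactly at the threshold $s>\tfrac{d+k-1}{2}$.

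Granting the claim, a disintegration argument closes the induction: writing the inner-product vector as a base block (the $\binom{k}{2}$ inner products among $x^1,\dots,x^k$) together with a fiber block (the $k$ inner products of $x^{k+1}$ with the base), the pushforward of $\mu^{k+1}$ disintegrates over the base pushforward, whose $L^2$ absolute continuity is the inductive hypothesis, with fiber measures given by $P_*\mu$, whose $L^2$ absolute continuity is the claim; the two combine to produce an $L^2$ density for the full $(k+1)$-point pushforward, hence $\mathcal{L}^{\binom{k+1}{2}}(T_k(E))>0$. The threshold bookkeeping is consistent: adding the $(j+1)$-st vertex is a projection onto $\mathbb{R}^j$ requiring $s>\tfrac{d+j-1}{2}$, and since this increases with $j$ the binding constraint is the final step $j=k$, giving exactly $\dim_{\mathcal H}(E)>\tfrac{d+k-1}{2}$ and recovering $d/2$ when $k=1$.

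I expect the main obstacle to be the multi-parameter projection estimate itself, in two respects. First, the Gram determinant $\sqrt{\det G(x)}$ sits in the denominator, so the bound must be made uniform over a positive-measure family of \emph{non-degenerate} tuples; I would excise a neighborhood of the locus where $x^1,\dots,x^k$ are nearly linearly dependent and use the Frostman condition together with $k\le d$ to show this locus carries negligible $\mu^k$-mass and does not destroy integrability. Second, and more seriously, quantifying how the spherical curvature converts the dimensional hypothesis into convergence of the reconstructed $d$-dimensional integral is the technical heart; this is exactly where the tube estimate~\eqref{tubesize} and the decay estimate~\eqref{decay} underpinning Theorem~\ref{mainprojection} must be deployed in their multi-parameter form, and controlling the $L^2$ spherical averages of $\widehat{\mu}$ uniformly over the relevant family of $k$-planes is the crux of the argument.
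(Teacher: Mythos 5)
Your overall architecture does match the paper's: reduce congruence classes on the sphere to inner-product vectors via $|x-y|^2=2t^2-2x\cdot y$, prove that for $\mu^k$-a.e.\ tuple $y=(y^1,\dots,y^k)\in E^k$ the pushforward $\nu_y$ of $\mu$ under $x\mapsto(x\cdot y^1,\dots,x\cdot y^k)$ has an $L^2$ density once $s>\frac{d+k-1}{2}$, and then close by induction. Your disintegration step is sound, and your strengthening of the inductive hypothesis to absolute continuity is exactly the right move --- it is what the paper implements with its sets $\mathcal{E}_k$, $P_{k-1}$ and $F_y$ and a Fubini argument (positivity of measure alone would not let you integrate slices of a null set). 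The genuine gap is that the crux --- the a.e.-tuple $L^2$ projection bound --- is precisely what you leave unproven, and your diagnosis of its mechanism points in the wrong direction: no ``spherical curvature,'' no $L^2$-averaged Fourier decay of $\widehat{\mu}$ over families of $k$-planes, and no multi-parameter versions of \eqref{decay} or \eqref{tubesize} enter the paper's proof. Your displayed Gram-determinant identity also cannot be closed as written, since it expresses the quantity through restrictions of $|\widehat{\mu}|^2$ to the measure-zero planes $V(x)$, and ``reconstructing a weighted $d$-dimensional integral'' from these restrictions is exactly the unproved step.

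What actually makes the estimate work is elementary geometric measure theory plus mollification. Since $\widehat{\nu}_y(t)=\widehat{\mu}(t\cdot y)$, one inserts the cutoff $\phi$ (the uncertainty-principle step, which is what rigorously replaces your restriction to $V(x)$) and reduces to bounding the weight $w(\xi)=\int\int|\widehat{\phi}(t\cdot y-\xi)|\,dt\,d\mu^{*}(y)$. The rapid decay of $\widehat{\phi}$ forces $t_1y^1+\dots+t_ky^k$ to lie within $O(1)$ of $\xi$, which for $|\xi|$ large confines $y^k$ to a $\delta$-neighborhood, $\delta\approx|\xi|^{-1}$, of the $k$-plane through the origin spanned by $y^1,\dots,y^{k-1},\xi$. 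Because $E\subset S_t$, that plane meets the sphere in a $(k-1)$-dimensional sphere, whose $\delta$-neighborhood is covered by $\approx\delta^{-(k-1)}$ balls of radius $\delta$; the Frostman condition alone then gives $\mu$-mass $\lesssim\delta^{s-k+1}$, i.e.\ $w(\xi)\lessapprox|\xi|^{-s+k-1}$, and the energy integral $\int|\widehat{\mu}(\xi)|^2|\xi|^{-s+k-1}\,d\xi$ converges exactly when $s>\frac{d+k-1}{2}$. So the sphere is used only to guarantee that plane sections of $E$ are $(k-1)$-dimensional, not for any oscillatory or curvature input. Finally, your worry about uniformity near the degenerate locus (the Gram determinant in the denominator) is dissolved rather than excised in the paper: since $s>k$, one pigeonholes $E$ into $k-1$ disjoint subsets $E_j$ with $\mu(E_j)\ge c$ such that every transversal choice $y^j\in E_j$ is automatically linearly independent, and runs the argument with $\mu^{*}$ replaced by the restricted product measure. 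Without the plane--sphere neighborhood estimate above, your proposal has the correct skeleton of Theorem \ref{mainspherekpoint} (including the base case via Corollary \ref{mainspherepinneddistance}) but is missing its technical heart.
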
 

Using a pigeon-holing argument, we obtain the following result for finite point configurations in ${\Bbb R}^d$. 

\begin{corollary} \label{mainkpoint} Let $E \subset {\Bbb R}^d$ of Hausdorff dimension $>\frac{d+k+1}{2}$. Then 
$$ {\mathcal L}^{k+1 \choose 2}(T_k(E))>0.$$ 
\end{corollary}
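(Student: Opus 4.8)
The plan is to deduce the Euclidean statement from the spherical one, Theorem \ref{mainspherekpoint}, by slicing $E$ into spheres at the cost of exactly one dimension; this accounts precisely for the gap between the threshold $\frac{d+k+1}{2}$ here and the threshold $\frac{d+k-1}{2}$ on the sphere, since $\frac{d+k+1}{2}-\frac{d+k-1}{2}=1$. The first, purely formal, ingredient is monotonicity of $T_k$ under inclusion: if $E' \subseteq E$ then every congruence class of a $(k+1)$-point configuration drawn from $E'$ is a fortiori a configuration drawn from $E$, so $T_k(E') \subseteq T_k(E)$ and hence $\mathcal{L}^{\binom{k+1}{2}}(T_k(E')) \leq \mathcal{L}^{\binom{k+1}{2}}(T_k(E))$. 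It therefore suffices to exhibit a single subset $E' = E \cap S_t$ lying on one sphere with $\dim_{\mathcal H}(E') > \frac{d+k-1}{2}$.

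To locate such a sphere I would foliate $\mathbb{R}^d \setminus \{0\}$ by the level sets $S_t = \{x : |x| = t\}$ of the smooth submersion $f(x) = |x|$. We may assume $E$ is bounded and bounded away from the origin: by countable stability of Hausdorff dimension one of the dyadic annuli $\{2^{-n-1} \leq |x| \leq 2^{-n}\}$, or the region $\{|x| \geq 1\}$, meets $E$ in a set of full dimension, and we replace $E$ by that piece. Next, fix $s$ with $\frac{d+k+1}{2} < s < \dim_{\mathcal H}(E)$ and a probability measure $\mu$ on $E$ with finite Riesz energy $I_s(\mu) = \iint |x-y|^{-s}\, d\mu(x)\, d\mu(y) < \infty$. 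Disintegrating $\mu$ over the radial variable as $\mu = \int \mu_t\, d\nu(t)$, with $\nu = f_{\ast}\mu$ and $\mu_t$ supported on $E \cap S_t$, a slicing estimate for Riesz energies of the form $\int I_{s-1}(\mu_t)\, d\nu(t) \lesssim I_s(\mu) < \infty$ shows that $I_{s-1}(\mu_t) < \infty$, and therefore $\dim_{\mathcal H}(E \cap S_t) \geq s-1 > \frac{d+k-1}{2}$, for $\nu$-almost every $t$; in particular for a positive-measure set of radii.

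Picking any such radius $t$ and applying Theorem \ref{mainspherekpoint} to $E \cap S_t \subseteq S_t$ then yields $\mathcal{L}^{\binom{k+1}{2}}(T_k(E \cap S_t)) > 0$, and the monotonicity noted above forces $\mathcal{L}^{\binom{k+1}{2}}(T_k(E)) > 0$, which is the claim.

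The one step that is not routine is the energy slicing inequality $\int I_{s-1}(\mu_t)\, d\nu(t) \lesssim I_s(\mu)$, that is, the \emph{lower} bound $\dim_{\mathcal H}(E \cap S_t) \geq s-1$ on a positive-measure set of leaves; the matching upper bound is the easy Fubini direction and is not needed. This lower bound is the heart of Marstrand-type slicing theorems, and the delicate point here is that the leaves are spheres rather than affine hyperplanes. The remedy is that $f(x)=|x|$ is a submersion with non-vanishing gradient on $\mathrm{supp}\,\mu$ (this is exactly where bounding $E$ away from the origin is used), so the coarea formula furnishes a uniformly bounded Jacobian and, at the relevant scales, the spherical foliation is a bounded perturbation of a foliation by parallel hyperplanes, whence the classical energy computation goes through. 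Equivalently one can run the discretized ``pigeon-holing'' version: cover the radial range by $\approx \delta^{-1}$ intervals of length $\delta$, pull these back to spherical shells of thickness $\delta$, show by an averaging/pigeonhole argument that a positive proportion of shells carry a $\delta$-discretized measure obeying the $(s-1)$-dimensional Frostman bound uniformly in $\delta$, and pass to a weak limit on a single sphere. I expect verifying this uniform Frostman bound as $\delta \to 0$ to be the main technical obstacle.
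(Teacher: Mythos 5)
Your overall skeleton --- pass to a subset lying on a single sphere via monotonicity of $T_k$, check that the dimension drop is exactly $1$, then invoke Theorem \ref{mainspherekpoint} --- is the same as the paper's, and the monotonicity observation and the final reduction are fine. The gap is the step you yourself flag as the technical heart: the slicing inequality $\int I_{s-1}(\mu_t)\, d\nu(t) \lesssim I_s(\mu)$ for the \emph{fixed} foliation by spheres centered at the origin is false, and no discretized pigeonholing in $\delta$ can rescue it, because even the qualitative conclusion fails. There exist sets of Hausdorff dimension arbitrarily close to $d$ that meet \emph{every} sphere $S_t$ in a single point: radial graphs $\{t\,\omega(t): t \in [1,2]\}$ with $\omega$ a wildly oscillating direction function (for instance, graphs of fractional Brownian paths with Hurst index $H \to 0$ have dimension $1+(d-1)(1-H) \to d$). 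For a measure $\mu$ of finite $s$-energy on such a set, every sliced measure $\mu_t$ is a point mass, so $I_{s-1}(\mu_t)=\infty$ for $\nu$-a.e.\ $t$ while $I_s(\mu)<\infty$, and $\dim_{\mathcal H}(E \cap S_t)=0$ for every $t$. This is the same phenomenon that defeats Marstrand slicing for a \emph{fixed} direction of hyperplanes (graphs of functions are the classical counterexample); the lower bound on slice dimension is only true after averaging over the parameters that define the foliation --- directions for planes, \emph{centers} for spheres. So your remark that the spherical foliation is ``a bounded perturbation of a foliation by parallel hyperplanes, whence the classical energy computation goes through'' cuts the other way: the fixed-direction hyperplane computation already lacks the lower bound you need, and the coarea/Jacobian considerations are irrelevant to this failure.

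The paper's proof performs exactly the missing averaging over centers: it cites Mattila's intersection theorem (Theorem 13.11 of \cite{Mat95}) with $B=S^{d-1}$, which gives, for $\dim_{\mathcal H}(E)=s>1$, a positive-measure set of translates $z$ with $\dim_{\mathcal H}\big(E \cap (z+S^{d-1})\big) \ge s-1$; note this requires $b=d-1>\frac{d+1}{2}$, i.e.\ $d \ge 4$, a restriction your (incorrect) fixed-foliation argument would not have seen. Choosing one such $z$, so that $\dim_{\mathcal H}(E \cap (z+S^{d-1}))>\frac{d+k-1}{2}$, and using translation invariance --- for $x=x'+z$, $y=y'+z$ on $z+S^{d-1}$ one has ${|x-y|}^2={|x'-y'|}^2=2-2x'\cdot y'$, and $T_k$ is defined modulo rigid motions in any case --- the problem reduces to Theorem \ref{mainspherekpoint} exactly as in your last step. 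In short: your reduction scheme is the right one and matches the paper, but the key lemma you propose to power it is false as stated, and the correct replacement is the translated-sphere intersection theorem rather than any slicing bound for a single concentric foliation.
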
 

\begin{remark} We do not know to what extent our results are sharp beyond the following observations. If the Hausdorff dimension of $E$ is less than $\frac{d}{2}$, the classical example due to Falconer (\cite{Falconer86}) shows that the set of distances may have measure $0$, so, in particular, ${\mathcal L}^{k+1 \choose 2}(T_k(E))$ may be $0$ for $k>1$. See also \cite{Falconer85} and \cite{Mat95} for the description of the background material and \cite{Mat85} and \cite{IoSe10} for related counter-example construction.  In two dimensions, one can generalize Falconer's example to show that if the Hausdorff dimension of $E$ is less than $\frac{3}{2}$, then the three dimensional Lebesgue measure of $T_2(E)$ may be zero. In higher dimension, construction of examples of this type is fraught with serious number theoretic difficulties. We hope to address this issue in a systematic way in the sequel. \end{remark} 

\vskip.125in 

\section{Proofs of main results} 

\vskip.125in 

\subsection{Proof of the projection results (Theorem \ref{mainprojection})} 

\vskip.125in 

Define the measure $\nu_y$ on $\pi_y(E)$ by the relation 
$$ \int g(s) d\nu_y(s)=\int g(x \cdot y) d\mu_E(x).$$ 

It follows that 
$$ \int \int {|\widehat{\nu}_y(t)|}^2 dt d\mu_F(y)=\int \int {|\widehat{\mu}_E(ty)|}^2 d\mu_F(y)dt.$$

We have 
$$ \int {|\widehat{\mu}_E(ty)|}^2 d\mu_F(y)=\int \int \widehat{\mu}_F(t(u-v)) d\mu_E(u) d\mu_E(v)$$
$$=\int \int_{|u-v| \leq t^{-1}} \widehat{\mu}_F(t(u-v)) d\mu_E(u) d\mu_E(v)$$
$$+\int \int_{|u-v|>t^{-1}} \widehat{\mu}_F(t(u-v)) d\mu_E(u) d\mu_E(v)=I+II.$$ 

Since $\mu_E$ is a Frostman measure, 
$$ |I| \lessapprox t^{-s_E}.$$ 

By \eqref{decay}, 
$$ |II| \lessapprox t^{-\gamma_F} \int \int {|u-v|}^{-\gamma_F} d\mu(u)d\mu(v) \lesssim t^{-\gamma_F}$$ as long as $\gamma_F \leq s_E$. If $\gamma_F>s_E$, then for any $\epsilon>0$, 
$$ |II| \lesssim t^{-s_E+\epsilon} \int \int {|u-v|}^{-s_E+\epsilon} d\mu(u)d\mu(v) \lessapprox t^{-s_E},$$ and we conclude that 
\begin{equation} \label{decayuptheass}  \int {|\widehat{\mu}_E(ty)|}^2 d\mu_F(y) \lessapprox t^{-\min \{s_E, \gamma_F\}}.\end{equation} 

It follows that 
$$ \int \int {|\widehat{\nu}_y(t)|}^2 t^{-1+\alpha} d\mu_F(y)dt<\infty$$ if $\min \{s_E, \gamma_F\}>\alpha$. 

\vskip.125in 

We now argue via the uncertainty principle. We may assume, by scaling and pigeon-holing, that $F \subset \{x \in {\Bbb R}^d: 1 \leq |x| \leq 2\}$. Let $\phi$ be a smooth cut-off function supported in the ball of radius $3$ and identically equal to $1$ in the ball of radius $2$. It follows that 
$$ \int {|\widehat{\nu}_y(t)|}^2 t^{-1+\alpha} dt$$
$$ =\int \int {|\widehat{\mu}_E(ty)|}^2 \ t^{-1+\alpha} \ dt \ d\mu_F(y)=
\int \int {\left|\widehat{\mu}_E*\widehat{\phi}(ty)\right|}^2 d\mu_F(y) \ t^{-1+\alpha} \ dt$$
$$ \lesssim \int \int \int {|\widehat{\mu}_E(\xi)|}^2 |\widehat{\phi}(ty-\xi)| d\mu_F(y) \ t^{-1+\alpha} \ dt d\xi$$
$$ \lesssim \sum_m 2^{-mn} 
\int {|\widehat{\mu}_E(\xi)|}^2 \mu_F \times {\mathcal L}^1 \{(y,t): |ty-\xi| \leq 2^m\} {|\xi|}^{-1+\alpha} d\xi$$
$$ \lesssim \sum_m 2^{-mn} \cdot 2^m 
\int {|\widehat{\mu}_E(\xi)|}^2 \mu_F \left( T_{{2^{m}|\xi|}^{-1}}(\xi) \right) {|\xi|}^{-1+\alpha} d\xi,$$ where 
$T_{\delta}(\xi)$ is the tube of width $\delta$ and length 10 emanating from the origin in the direction of 
$\frac{\xi}{|\xi|}$. By assumption, this expression is 
$$ \lessapprox \sum_m 2^{-mn} \cdot 2^m \cdot 2^{m(s_F-l_f)} 
\int {|\widehat{\mu}_E(\xi)|}^2 {|\xi|}^{-s_F+l_F-1+\alpha} d\xi \lesssim 1$$ if 
$$ s_F-l_F+1-\alpha>d-s_E$$ and $n$ is chosen to be sufficiently large. Combining this with \eqref{decayuptheass} we obtain the conclusion of Theorem \ref{mainprojection}. 

\vskip.125in 

\subsection{Proof of applications to sums and products (Corollary \ref{mainsumproduct})} 

\vskip.125in 
Let $A\subset {\Bbb R}$ have dimension greater than $s_A:=\frac{1}{2}+\frac{1}{2(2d-1)}$.
Note that we can find a probability measure, $\mu_A$, supported on $A$ satisfying
\begin{align}\label{eq:muA1}
\mu_A(B(x,r))&\leq Cr^{s_A}, \quad x\in {\Bbb R}, r>0, \end{align}

Let $E=A \times A \times \dots \times A$. Define $\mu_E=\mu_A\times \mu_A\times \cdots \times \mu_A$. 

\begin{lemma} \label{tube} With the notation above, 
\begin{equation} \label{keysumproduct} \mu_E \left( T_{\delta} \right) \lessapprox {\delta}^{(d-1)s_A},\end{equation} where $dim_{{\mathcal H}}(A)=s_A$. 
\end{lemma}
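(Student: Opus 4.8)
\emph{Setup.} The plan is to exploit the product structure of $\mu_E$ together with the anisotropy of the tube. By the hypothesis on $T_{\delta}$ I may assume the tube passes through the origin, so it has the form $T_{\delta}=\{x\in{\Bbb R}^d: |x|\lesssim 1,\ |\Pi_{\omega}x|\le\delta\}$, where $\omega\in S^{d-1}$ is the direction of the tube and $\Pi_{\omega}x=x-(x\cdot\omega)\omega$ is the orthogonal projection of $x$ onto the hyperplane $\omega^{\perp}$. Writing $\mu_E(T_{\delta})=\int \mathbf 1_{T_{\delta}}(x_1,\dots,x_d)\,d\mu_A(x_1)\cdots d\mu_A(x_d)$, the naive estimate obtained by covering $T_{\delta}$ with $\approx\delta^{-1}$ balls of radius $\delta$ (each of $\mu_E$-measure $\lesssim\delta^{ds_A}$) only yields $\mu_E(T_{\delta})\lessapprox\delta^{ds_A-1}$, which is weaker than \eqref{keysumproduct} precisely because $s_A<1$. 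The point is therefore to let one coordinate be integrated freely while the remaining $d-1$ are each confined to an interval of length $\approx\delta$.

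\emph{Key geometric step.} I would choose the index $j^{\ast}$ maximizing $|\omega_{j^{\ast}}|$, so that $|\omega_{j^{\ast}}|\ge 1/\sqrt d$. Since $|(\Pi_{\omega}x)_j|\le|\Pi_{\omega}x|\le\delta$ for every $j$, membership $x\in T_{\delta}$ forces $|x_j-(x\cdot\omega)\omega_j|\le\delta$ for all $j$. Taking $j=j^{\ast}$ and dividing by $\omega_{j^{\ast}}$ gives $x\cdot\omega=x_{j^{\ast}}/\omega_{j^{\ast}}+O(\sqrt d\,\delta)$; substituting this back into the inequality for a general index $j\ne j^{\ast}$ and using $|\omega_j|\le1$ yields $x_j=\frac{\omega_j}{\omega_{j^{\ast}}}x_{j^{\ast}}+O_d(\delta)$. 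In other words, once $x_{j^{\ast}}$ is fixed, each of the remaining coordinates $x_j$ is pinned to an interval $I_j(x_{j^{\ast}})$ of length $\lesssim_d\delta$.

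\emph{Conclusion.} With this pinning in hand, I would integrate the coordinates taking $x_{j^{\ast}}$ last:
\begin{equation*}
\mu_E(T_{\delta})\le\int\Big(\prod_{j\ne j^{\ast}}\int_{I_j(x_{j^{\ast}})}d\mu_A(x_j)\Big)\,d\mu_A(x_{j^{\ast}})\le\int\prod_{j\ne j^{\ast}}C\delta^{s_A}\,d\mu_A(x_{j^{\ast}})\le C^{d-1}\delta^{(d-1)s_A},
\end{equation*}
where each inner integral is estimated by the Frostman bound \eqref{eq:muA1} applied to the interval $I_j(x_{j^{\ast}})$, and the outer integral is at most $\mu_A({\Bbb R})=1$. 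This is exactly \eqref{keysumproduct}.

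\emph{Main obstacle.} The only genuine difficulty is the tilt of the tube: when $\omega$ is nearly parallel to a coordinate axis, the corresponding coordinate genuinely ranges over the full length $\approx1$ of the tube, so one cannot hope to gain a factor $\delta^{s_A}$ from it. The device that resolves this is to let that very coordinate, $x_{j^{\ast}}$, be the free one; the inequality $|\omega_{j^{\ast}}|\ge1/\sqrt d$ is then exactly what makes the inversion $x\cdot\omega=x_{j^{\ast}}/\omega_{j^{\ast}}+O(\delta)$ stable, with all implicit constants depending only on $d$. I expect no further subtlety, since the bound for each pinned coordinate is a direct application of \eqref{eq:muA1}, and the same argument in fact applies to tubes whose axis does not pass through the origin.
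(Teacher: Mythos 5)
Your proposal is correct and follows essentially the same route as the paper: the paper also singles out the largest coordinate of the tube direction (its $\xi_1$ playing the role of your $\omega_{j^{\ast}}$), parametrizes the line by $\Psi(a)=\left(a,\,a\tfrac{\xi_2}{\xi_1},\dots,a\tfrac{\xi_d}{\xi_1}\right)$, and integrates in that free coordinate while the remaining $d-1$ coordinates are pinned to intervals of length $\approx\delta$, each contributing a factor $\delta^{s_A}$ via the Frostman bound \eqref{eq:muA1}. Your write-up merely makes explicit the $O_d(\delta)$ geometric stability that the paper leaves implicit in the containment of the tube slice in the ball $B(\Psi(x_1),\delta)$.
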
 
\begin{proof}
Let $l_{\xi}=\{s \xi: s \in {\Bbb R}\}$ and assume without loss of generality that $\xi_1$ is the largest coordinate of $\xi$ in absolute value. In particular, $\xi_1 \not=0$. Define the function 
$$ \Psi: A \rightarrow (A \times A \times \dots \times A) \cap l_{\xi}$$ by the relation 
$$ \Psi(a)=\left(a, a \frac{\xi_2}{\xi_1}, \dots, a \frac{\xi_d}{\xi_1} \right).$$ 
Note that 
\begin{align*}
\mu\left( T_{\delta}(\xi) \right)&=\mu_A\times\cdots\times \mu_A\left( T_{\delta}(\xi) \right)\\
&\leq \int_{-10}^{10} \mu_A\times\cdots\times \mu_A(B(\Psi(x_1), \delta))d\mu_A(x_1)\\
&\lesssim \int_{-10}^{10} \delta^{(d-1) s_A}d\mu_A(x_1)\lesssim \delta^{(d-1) s_A}.
\end{align*}
\end{proof}

It follows that if $E=F=A \times A \times \dots \times A$, then the assumptions of Theorem \ref{mainprojection} are satisfied with $s_E=s_F=d s_A$, $\gamma_F=0$ and $l_F=\frac{s_E}{d}$. The conclusion of the first part of Corollary \ref{mainsumproduct} follows in view of Theorem \ref{mainprojection}. 

The second conclusion of Corollary \ref{mainsumproduct} follows, in view of Theorem \ref{mainprojection}, if we observe that if $F$ is star-like, then 
$$ \mu_F(T_{\delta}) \lessapprox \delta^{-s_F}.$$ 

The third conclusion of Corollary \ref{mainsumproduct} follows from Theorem \ref{mainprojection} since we may always take $l_F=1$. This holds since every tube $\delta$ is contained in a union of 
$\delta^{-1}$ balls of radius $\delta$. 

\vskip.125in 

\subsection{Proof of the spherical configuration result (Theorem \ref{mainspherekpoint})} 

\vskip.125in 

Let $y=(y^1,y^2, \dots, y^{k})$, $y^j \in E$ and define 
$$ \pi_y(E)=\{x \cdot y^1, \dots, x \cdot y^{k}:x\in E\}.$$ 

Define a measure on $\pi_y(E)$ by the relation 
$$ \int g(s) d\nu_y(s)=\int g(x \cdot y^1, \dots, x \cdot y^{k}) d\mu(x),$$ where 
$s=(s_1, \dots, s_{k})$ and $\mu$ is a Frostman measure on $E$. It follows that 
$$ \widehat{\nu}_y(t)=\widehat{\mu}(t \cdot y),$$ where $t=(t_1, \dots, t_{k})$ and 
$$ t \cdot y=t_1y^1+t_2y^2+\dots+t_{k}y^{k}.$$ It follows that 
$$ \int \int {|\widehat{\nu}_y(t)|}^2 dt d\mu^{*}(y)=\int \int {|\widehat{\mu}(t \cdot y)|}^2 dt d\mu^{*}(y),$$ where 
$$ d\mu^{*}(y)=d\mu(y^1) d\mu(y^2) \dots d\mu(y^{k}).$$ 

Arguing as above, this quantity is bounded by 
$$ \int \int \int {|\widehat{\mu}(\xi)|}^2 |\widehat{\phi}(t \cdot y-\xi)| dt d\mu^{*}  d\xi.$$ 

It is not difficult to see that 
$$ \int \int  |\widehat{\phi}(t \cdot y-\xi)| dt d\mu^{*} \lesssim {|\xi|}^{-s+k-1}$$ since once we fix a linearly independent collection $y^1, y^2, \dots, y^{k-1}$, $y^{k}$ is contained in the intersection of $E$ with a $k$-dimensional plane. Since $E$ is also a subset of a sphere, the claim follows. If $y^1, \dots, y^{k-1}$ are not linearly independent, the estimate still holds, but the easiest way to proceed is to observe that since the Hausdorff dimension of $E$ is greater than $k$ by assumption, there exist $k-1$ disjoint subsets $E_1, E_2, \dots, E_{k-1}$ of $E$ and a constant $c>0$ such that $\mu(E_j) \ge c$ and any collection $y^1, \dots, y^{k-1}$, $y^j \in E_j$, is linearly independent. This establishes our claim with $\mu^{*}$ replaced by the product measure restricted to $E_j$s, which results in the same conclusion. 

Plugging this in we get 
$$ \int {|\widehat{\mu}(\xi)|}^2 {|\xi|}^{-s+k-1} d\xi<\infty$$ if 
$$ s>\frac{d}{2}+\frac{k-1}{2}.$$ This implies that for $\mu^k$ almost every $k$-tuple $y=(y^1,y^2,\ldots,y^{k})\in E^k$, $\nu_y$ is absolutely continuous w.r.t $\mathcal{L}^k$ and hence its support, $\pi_y(E)$, is of positive $\mathcal{L}^k$ measure.

Let 
$$\mathcal{E}_k=\{y=(y^1,\ldots,y^k)\in E^k:\mathcal{L}^k(\pi_y(E))>0\}.$$
We just proved that $\mu^k(\mathcal{E}_k)=\mu^k(E^k)>0$.

Consider the set 
$$P_{k-1}=\big\{(y^1,\ldots,y^{k-1})\in E^{k-1}: \mu\big(\{x:(y^1,\ldots,y^{k-1},x)\in\mathcal{E}_k\}\big)=\mu(E)\big\}.$$
By the discussion above, and Fubini,  $\mu^{k-1}(P_{k-1})=\mu(E)^{k-1}>0$.

For each $y=(y^1,\ldots,y^{k-1})\in P_{k-1}$, let 
$F_y=\{x\in E:(y^1,\ldots,y^{k-1},x)\in \mathcal{E}_k\}$, and define
$$\pi_{y}(F_y)=\{(x\cdot y^1,\ldots,x\cdot y^{k-1}):x\in F_y\}.$$
As above we construct the measure $\nu_y$ supported on $\pi_{y}(F_y)$, and we have 
$$\widehat{\nu_y}(t)=\widehat{\mu\chi_{F_y}}(t\cdot y)=\widehat{\mu}(t\cdot y),$$
since $\mu(E\backslash F_y)=0$. By the argument above, we conclude that $\pi_y(F_y)$ is of positive 
$\mathcal{L}^{k-1}$ measure. Therefore, by Fubini, for $\mu^{k-1}$ a.e. $(y^1,\ldots,y^{k-1})\in E^{k-1}$,
$$\mathcal{L}^{k+k-1}\{y^1\cdot y,\ldots,y^{k-1}\cdot y,y^1\cdot x,\ldots,y^{k-1}\cdot x, y\cdot x:
x,y\in E \}>0.$$
The result now follows from this induction step.

\vskip.25in 

\section{Proof of the Euclidean configuration result (\ref{mainkpoint})} 

\vskip.125in 

We shall make use of the following intersection result. See Theorem 13.11 in \cite{Mat95}. 

\begin{theorem} Let $a,b>0, a+b>d$, and $b>\frac{d+1}{2}$. If $A,B$ are Borel subsets in ${\Bbb R}^d$ with ${\mathcal H}^a(A)>0$ and ${\mathcal H}^b(B)>0$, then for almost all $g \in O(d)$, 
$$ {\mathcal L}^d \{z \in {\Bbb R}^d: dim_{{\mathcal H}}(A \cap (\tau_z \circ g)B) \ge a+b-d \}>0,$$ where $\tau_z$ denotes the translation by $z$. 
\end{theorem}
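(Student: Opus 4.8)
The plan is to follow the classical energy/slicing method for intersections of fractal sets. First I would pass from sets to measures: since ${\mathcal H}^a(A)>0$ and ${\mathcal H}^b(B)>0$, Frostman's lemma furnishes compactly supported probability measures $\mu$ on $A$ and $\nu$ on $B$ with finite Riesz energies $I_{a'}(\mu)<\infty$ and $I_{b'}(\nu)<\infty$ for every $a'<a$ and $b'<b$, where $I_s(\lambda)=\iint {|x-y|}^{-s}\,d\lambda(x)\,d\lambda(y)$. For $g\in O(d)$ and $z\in{\Bbb R}^d$ write $\nu_{g,z}$ for the push-forward of $\nu$ under $x\mapsto gx+z$, fix a standard approximate identity $\phi_{\epsilon}$, and define the candidate intersection measure
$$ d\lambda_{g,z}^{\epsilon}(x)=(\nu_{g,z}*\phi_{\epsilon})(x)\,d\mu(x), $$
which concentrates, as $\epsilon\to 0$, on $\mathrm{spt}\,\mu\cap\mathrm{spt}\,\nu_{g,z}=A\cap(\tau_z\circ g)B$. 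The goal is to produce, for almost every $g$ and for a positive ${\mathcal L}^d$-measure set of $z$, a nonzero weak limit $\lambda_{g,z}$ living on this intersection and satisfying a Frostman condition $\lambda_{g,z}(B(x,r))\lesssim r^{s}$ for every $s<a+b-d$; since a set carrying such a measure has Hausdorff dimension at least $s$, this yields $\dim_{\mathcal H}(A\cap(\tau_z\circ g)B)\ge a+b-d$.

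The easy half is the non-triviality estimate. Computing the total mass and integrating in $z$ and in $g$, Fubini collapses the mollifier, since $\int_{{\Bbb R}^d}\phi_{\epsilon}(x-gy-z)\,dz=1$, and gives
$$ \int_{O(d)}\int_{{\Bbb R}^d}\|\lambda_{g,z}^{\epsilon}\|\,dz\,dg=\|\mu\|\,\|\nu\|=1 $$
uniformly in $\epsilon$. Together with the corresponding second-moment bound this prevents the limiting measures from degenerating to zero. This is exactly where the hypothesis $a+b>d$ is used: it guarantees that the push-forward of $\mu\times\nu$ under $(x,y)\mapsto x-gy$, namely the convolution of $\mu$ with the reflected rotate of $\nu$, is absolutely continuous with positive density on a set of $z$ of positive Lebesgue measure, so that $A\cap(\tau_z\circ g)B$ is genuinely nonempty for many $(g,z)$.

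The heart of the matter — and the step I expect to be the main obstacle — is the dimensional lower bound at the sharp exponent $a+b-d$. The natural route is to realize $A\cap(\tau_z\circ g)B$ as a planar section of the product set $A\times B\subset{\Bbb R}^{2d}$ cut by the $d$-plane $\{(x,y):x-gy=z\}$, disintegrate $\mu\times\nu$ into slice measures along the map $\pi_g(x,y)=x-gy$, and run a Marstrand-type slicing argument averaged over the rotation group. Unwinding the averaged local masses of $\lambda_{g,z}^{\epsilon}$ by integrating first in $z$ (which replaces $\phi_{\epsilon}$ by $\phi_{\epsilon}*\widetilde{\phi_{\epsilon}}$) and then in $g$ (which turns the rotation average into a spherical average) leads, on the Fourier side, to the spherical averages
$$ \sigma(\nu)(r)=\int_{S^{d-1}}{|\widehat{\nu}(r\omega)|}^2\,d\omega $$
of the Frostman measure on the high-dimensional set $B$. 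Pushing the slicing estimate through at the full exponent requires quantitative decay of $\sigma(\nu)(r)$, and this is precisely where $b>\frac{d+1}{2}$ enters: it is the Falconer-type threshold that makes the spherical averages of $\nu$ decay fast enough. The delicate point, which I expect to carry all the technical weight, is that the admissible slicing planes form only the $\frac{d(d-1)}{2}$-dimensional sub-family $\{\mathrm{graph}(g):g\in O(d)\}$ of the full Grassmannian $G(2d,d)$, so the generic slicing theorem cannot be quoted directly; the condition on $b$ is what supplies the transversality of this restricted family needed to recover the generic behavior.

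Finally I would assemble the pieces. The slicing estimate delivers, for almost every $g\in O(d)$, the Frostman bound of exponent $s$ on $\lambda_{g,z}$ for ${\mathcal L}^d$-a.e.\ $z$. Intersecting with the positive-measure set of $z$ on which $\lambda_{g,z}\neq 0$ coming from the mass bound leaves, for almost every $g$, a set of $z$ of positive Lebesgue measure carrying a nonzero measure obeying $\lambda_{g,z}(B(x,r))\lesssim r^{s}$ on $A\cap(\tau_z\circ g)B$. Letting $s\uparrow a+b-d$ through a countable sequence, and using that a countable intersection of full-measure sets of $z$ is still of full measure, yields
$$ \dim_{\mathcal H}\big(A\cap(\tau_z\circ g)B\big)\ge a+b-d $$
on a set of $z$ of positive ${\mathcal L}^d$-measure, for almost every $g\in O(d)$, which is the assertion.
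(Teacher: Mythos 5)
The paper offers no internal proof of this statement to compare against: it is imported wholesale as Theorem 13.11 of Mattila's book \cite{Mat95}. Your outline is, in essence, a faithful reconstruction of Mattila's own argument for that theorem: Frostman measures of finite $a'$- and $b'$-energy, the mollified intersection measures $d\lambda^{\epsilon}_{g,z}=(\nu_{g,z}*\phi_{\epsilon})\,d\mu$ obtained by slicing $\mu\times\nu$ along $(x,y)\mapsto x-gy$, first- and second-moment bounds in $z$ for nondegeneracy (where $a+b>d$ enters, via $\mu*\widetilde{g\nu}\in L^{2}$ for a.e.\ $g$), and an averaged energy estimate that reduces on the Fourier side to decay of the spherical averages $\sigma(\nu)(r)$ --- which is indeed exactly where $b>\frac{d+1}{2}$ is used, compensating, as you correctly observe, for the fact that only the $O(d)$-family of slicing planes rather than the full Grassmannian is available. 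Two caveats. First, the standard argument controls the $s$-energy $I_{s}(\lambda_{g,z})$ for $s<a+b-d$ rather than a pointwise Frostman bound $\lambda_{g,z}(B(x,r))\lesssim r^{s}$: a nonzero measure of finite $s$-energy supported on the intersection already forces $\dim_{{\mathcal H}}\ge s$, and the weak limit measure need not satisfy a uniform ball condition, so you should phrase the conclusion in energy terms. Second, the one genuinely technical step --- the quantitative spherical-average decay for measures of finite $b$-energy with $b>\frac{d+1}{2}$, and its insertion into the $(g,z)$-averaged energy integral --- is asserted rather than executed, so what you have is a correct, well-aimed road map for Mattila's proof rather than a self-contained argument; since the paper itself simply cites the result, that is nonetheless the appropriate level of reconstruction, and no step of your plan is wrong in direction.
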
 

In the special case when $B$ is the unit sphere in dimension $4$ or higher, we get the following corollary. 
\begin{corollary} Let $E \subset {\Bbb R}^d$, $d \ge 4$, of Hausdorff dimension $s>1$. Then 
$$ {\mathcal L}^d \{z \in {\Bbb R}^d: dim_{{\mathcal H}}(E \cap (S^{d-1}+z)) \ge s-1 \}>0.$$ 
\end{corollary}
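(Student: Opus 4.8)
The plan is to read this off directly from the intersection theorem (Mattila, Theorem 13.11 quoted above), applied with $A=E$ and $B=S^{d-1}$, the unit sphere centered at the origin, taking $b=\dim_{{\mathcal H}}(S^{d-1})=d-1$ and $a$ just below $s=\dim_{{\mathcal H}}(E)$.

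First I would verify the three hypotheses. The sphere is a smooth compact $(d-1)$-manifold, so ${\mathcal H}^{d-1}(S^{d-1})\in(0,\infty)$ and ${\mathcal H}^{b}(B)>0$; for any $a$ with $1<a<s$ one has ${\mathcal H}^{a}(E)=\infty>0$, so ${\mathcal H}^{a}(A)>0$. The condition $a+b>d$ reads $a>1$, which holds, and $b>\frac{d+1}{2}$ reads $d-1>\frac{d+1}{2}$, i.e. $d>3$. Thus the hypothesis $d\ge 4$ is used \emph{exactly} to make $b>\frac{d+1}{2}$, while $s>1$ is used \emph{exactly} to make $a+b>d$ attainable, and the output exponent is $a+b-d=a-1$. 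The theorem then gives, for almost every $g\in O(d)$,
$$ {\mathcal L}^{d}\{z:\dim_{{\mathcal H}}(E\cap(\tau_z\circ g)S^{d-1})\ge a-1\}>0. $$

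The idea that collapses this to the stated corollary is the $O(d)$-invariance of the sphere: since $|g(x)|=|x|$, we have $g(S^{d-1})=S^{d-1}$, hence $(\tau_z\circ g)S^{d-1}=g(S^{d-1})+z=S^{d-1}+z$ for \emph{every} $g\in O(d)$. The set appearing above therefore does not depend on $g$ at all, the ``almost every $g$'' quantifier is vacuous, and for each $a\in(1,s)$ we obtain
$$ {\mathcal L}^{d}\{z:\dim_{{\mathcal H}}(E\cap(S^{d-1}+z))\ge a-1\}>0. $$

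The main obstacle is reaching the critical threshold $s-1$ rather than $s-1-\epsilon$: because ${\mathcal H}^{s}(E)$ may vanish, the theorem cannot be applied with $a=s$ directly. Writing $Z_a=\{z:\dim_{{\mathcal H}}(E\cap(S^{d-1}+z))\ge a-1\}$, one checks that $\bigcap_{1<a<s}Z_a=\{z:\dim_{{\mathcal H}}(E\cap(S^{d-1}+z))\ge s-1\}$, so the corollary amounts to showing this intersection is non-null. Since the family $\{Z_a\}$ \emph{decreases} as $a\uparrow s$, the intersection could a priori be ${\mathcal L}^{d}$-null even though each $Z_a$ is not; the delicate point is to produce a lower bound for ${\mathcal L}^{d}(Z_a)$ that stays uniformly positive as $a\uparrow s$. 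This does not follow from the qualitative ``$>0$'' statement and must be extracted from the quantitative (energy-integral) mechanism underlying Theorem 13.11, whereby one builds measures on the slices $E\cap(S^{d-1}+z)$ whose $(a-1)$-energies are controlled by the $a$-energy of a Frostman measure on $E$ times a fixed spherical integral, uniformly for $a$ bounded away from $1$; localizing to a fixed large ball, on which the ambient ${\mathcal L}^{d}$-measure is finite, then lets continuity from above pass to the limit $a\uparrow s$ and conclude.
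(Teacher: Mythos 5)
Your proposal takes exactly the route the paper intends: the paper offers no separate argument for this corollary, presenting it as an immediate specialization of Mattila's Theorem 13.11 with $B=S^{d-1}$ and $b=d-1$, and your verification of the hypotheses --- $b>\frac{d+1}{2}$ precisely when $d\ge 4$, and $a+b>d$ precisely when $a>1$, with ${\mathcal H}^a(E)=\infty$ for any $a<s$ --- together with the observation that $g(S^{d-1})=S^{d-1}$ renders the ``almost every $g\in O(d)$'' quantifier vacuous, is exactly the bookkeeping the authors leave implicit. The one place where you go beyond the paper is also where you are right to be cautious: since ${\mathcal H}^s(E)$ may vanish, the black-box theorem only yields ${\mathcal L}^d(Z_a)>0$ with $Z_a=\{z:\dim_{{\mathcal H}}(E\cap(S^{d-1}+z))\ge a-1\}$ for each $a<s$, and the endpoint $s-1$ asserted in the corollary does not follow by intersecting the decreasing family $Z_a$, just as you say. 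The paper silently elides this point. Note, however, that in the paper's only use of the corollary --- the deduction of Corollary \ref{mainkpoint}, where $s>\frac{d+k+1}{2}$ and one needs a slice of dimension $>\frac{d+k-1}{2}$ --- choosing any $a$ with $\frac{d+k+1}{2}<a<s$ already suffices, so the $\epsilon$-loss version you prove rigorously is all that is ever needed downstream. Your final paragraph, which proposes extracting a lower bound for ${\mathcal L}^d(Z_a)$ uniform as $a\uparrow s$ from the energy-integral mechanism behind Theorem 13.11 and then invoking continuity from above on a fixed ball, is a plausible repair of the endpoint, but as written it is a sketch rather than a proof: the uniform positivity of ${\mathcal L}^d(Z_a)$ is asserted, not established, and it is the entire content of the endpoint claim. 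You should either carry out that quantitative step or restate the corollary with ``$\ge a-1$ for every $a<s$,'' which is what both your argument and the paper's application actually support.
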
 

It follows from the corollary that if $E \subset {\Bbb R}^d$, $d \ge 4$, is of Hausdorff dimension $>\frac{d+k+1}{2}$, there exists $z \in {\Bbb R}^d$, such that the Hausdorff dimension of $E \cap (z+S^{d-1})$ is $>\frac{d+k-1}{2}$. Now observe that if $x, y \in z+S^{d-1}$, means that $x=x'+z$, $y=y'+z$, where $x',y' \in S^{d-1}$. It follows that 
$$ {|x-y|}^2={|x'-y'|}^2=2-2x' \cdot y'.$$ 

In other words, the problem of simplexes determined by elements of $E \cap (z+S^{d-1})$ reduces to Theorem \ref{mainspherekpoint} and thus Theorem \ref{mainkpoint} is proved. 

\newpage

\end{document}